\documentclass[12pt]{amsart}
\usepackage[foot]{amsaddr}
\usepackage{amssymb,amsmath,amsthm,comment}
\usepackage{hyperref}
\usepackage{float}

\usepackage{setspace}
 \oddsidemargin = 1.2cm \evensidemargin = 1.2cm \textwidth =5.4in
\textheight =7.7in
\newcommand{\shrinkmargins}[1]{
  \addtolength{\textheight}{#1\topmargin}
  \addtolength{\textheight}{#1\topmargin}
  \addtolength{\textwidth}{#1\oddsidemargin}
  \addtolength{\textwidth}{#1\evensidemargin}
  \addtolength{\topmargin}{-#1\topmargin}
  \addtolength{\oddsidemargin}{-#1\oddsidemargin}
  \addtolength{\evensidemargin}{-#1\evensidemargin}
  }
\shrinkmargins{0.5}
\newtheorem{theorem}{Theorem}

\newtheorem{corollary}[theorem]{Corollary}
\newtheorem{thext}{Theorem}

\newtheorem{proposition}[theorem]{Proposition}

\theoremstyle{definition}

\theoremstyle{remark}
\newtheorem{remark}[theorem]{Remark}
\newtheorem{example}[theorem]{Example}

\def\func#1{\mathop{\rm #1}}%

\begin{document}
\title{Sequences with Inequalities}
\author{Bernhard Heim$^{1,2}$}
\address{$^{1}$Department of Mathematics and Computer Science\\Division of Mathematics\\University of Cologne\\ Weyertal 86--90 \\ 50931 Cologne \\Germany}
\address{$^{2}$Lehrstuhl A f\"{u}r Mathematik, RWTH Aachen University, 52056 Aachen, Germany}
\email{bheim@uni-koeln.de}
\author{Markus Neuhauser$^{3,4,\ast }$}
\address{$^{3}$Kutaisi International University, 5/7, Youth Avenue,  Kutaisi, 4600 Georgia}
\email{markus.neuhauser@kiu.edu.ge}
\address{$^{4}$Lehrstuhl f\"{u}r Geometrie und
Analysis, RWTH Aachen University, 52056 Aachen, Germany}
\email{neuhauser@mathga.rwth-aachen.de}
\thanks{$^{\ast }$Corresponding author}
\subjclass[2020] {Primary 05A20, 11P84; Secondary 05A17, 11B83}
\keywords{Inequalities, Partitions, Sequences}
\begin{abstract}
We consider infinite sequences of positive numbers.
The connection between
log-concavity and the Bessenrodt--Ono inequality had been in the focus of several papers.
This has applications in the white noise distribution theory
and combinatorics. We improve a recent result by
Benfield and Roy and show that for the sequence of partition numbers $\{p(n)\}$
Nicolas' log-concavity result implies the result by Bessenrodt and Ono towards $p(n) \, p(m) > p(n+m)$.
We provide several examples. Benfield and Roy gave a
conjecture related to $\ell $-ary partition numbers.
We prove part of this conjecture.
\end{abstract}
\maketitle
\newpage
\section{Introduction}
Newton (\cite{HLP52}, page 104) discovered that the coefficients $\{\alpha(n)\}_{n=0}^d$ of polynomials with
positive coefficients are log-concave if all the roots are real
and therefore are unimodal:
\begin{equation}\label{logconcave}
\alpha(n)^2 \geq \alpha(n-1) \,\,\, \alpha(n+1), \quad 1 \leq n\leq d-1.
\end{equation}

Nevertheless, non-real rooted polynomials as the chromatic polynomials related to the four-color conjecture 
have this propery \cite{Hu12} and play an important role in
algebraic geometry and Hodge theory to combinatorics \cite{Ka22}.
Infinite sequences seem to be not accessable by these methods,
although,  for example $p(n)$ the number of partitions \cite{On04}  of $n$,
have
also turned out to be log-concave \cite{Ni78,DP15} for all even positive 
$n$ and all odd numbers $n >25$ proven by the circle method by
Hardy--Ramanujan and an exact formula
by Rademacher.
It has been discovered that (\ref{logconcave}) is
also related to the hyperbolicity of the Jensen
polynomials associated with
the reciprocal of the Dedekind eta function
(the generating function of the $p(n)$) of degree two, in the context of studying the Riemann hypothesis \cite{GORZ19}.
Recently, another property had been discovered
by Bessenrodt and Ono \cite{BO16},
which is a mixture of additive and multiplicative behaviour:
\begin{equation*}
p(n) \, p(m) > p(n+m) , \,\,\, n,m \geq 2 \text{ and } n+m>9.
\end{equation*}
This inequality appears at a first glance as a very special property.
However, additional sequences were found to satisfy
this Bessenrodt--Ono inequality
and similiar inequalities
appear as conditions for a white noise distribution theory
\cite{AKK00, AKK01}.
Initiated by the work
by Asai, Kubo, and Kuo there is evidence that
under certain conditions log-concavity does imply the 
Bessenrodt--Ono inequality, see Corollary~\ref{cor:akk}.

\begin{thext}[Asai, Kubo, Kuo]
\label{thm:2}Let $\{\alpha(n)\}_{n=0}^{\infty}$
be a sequence of positive numbers normalized with $\alpha(0)=1$.
Then 
\begin{equation*}
\big\{\alpha(n)^2 \leq
\alpha(n-1) \alpha(n+1), \, n \geq 1 \big\}
\Longrightarrow
\big\{\alpha(n) \alpha(m) \leq \alpha(n+m), \, n,m \geq 0\big\}.
\end{equation*}
\end{thext}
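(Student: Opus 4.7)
The plan is to pass from the second-order inequality on $\alpha(n)$ to a first-order statement about the consecutive ratios $r_n := \alpha(n)/\alpha(n-1)$, which is where the hypothesis becomes transparent: the assumption $\alpha(n)^2 \le \alpha(n-1)\alpha(n+1)$ is exactly $r_{n+1} \ge r_n$ for all $n \ge 1$, i.e. the ratio sequence is non-decreasing.

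Next, I would use the normalization $\alpha(0)=1$ to express every term as a telescoping product, namely $\alpha(k) = r_1 r_2 \cdots r_k$ for $k\ge 1$, so that the desired inequality $\alpha(n)\alpha(m)\le \alpha(n+m)$ becomes
\begin{equation*}
r_1 r_2 \cdots r_m \;\le\; r_{n+1}\, r_{n+2}\cdots r_{n+m},
\end{equation*}
after cancelling the factor $\alpha(n) = r_1\cdots r_n$ from both sides. The boundary cases $n=0$ or $m=0$ reduce to equality using $\alpha(0)=1$, so one may assume $n,m\ge 1$.

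The remaining step is a termwise comparison: monotonicity of $(r_k)$ gives $r_{n+i}\ge r_i$ for each $i=1,\dots,m$, and multiplying these $m$ inequalities of positive numbers yields the factored inequality above. Reassembling everything delivers $\alpha(n)\alpha(m)\le \alpha(n+m)$.

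There is no real obstacle; the only point requiring attention is the proper handling of $\alpha(0)=1$ (needed to identify $\alpha(k)$ with the full product of ratios from index $1$) and the positivity of all $\alpha(n)$ (needed so that dividing and multiplying inequalities is valid and preserves direction). Everything else is a clean rearrangement.
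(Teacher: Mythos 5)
Your argument is correct: log-convexity is exactly the statement that the ratios $r_k=\alpha(k)/\alpha(k-1)$ are non-decreasing, the normalization $\alpha(0)=1$ turns $\alpha(k)$ into the telescoping product $r_1\cdots r_k$, and the termwise comparison $r_{n+i}\ge r_i$ ($i=1,\dots,m$) multiplied over positive factors gives $\alpha(n)\alpha(m)\le\alpha(n+m)$, with the cases $n=0$ or $m=0$ being equalities. Note, however, that the paper itself does not prove Theorem~\ref{thm:2}: it is quoted from Asai--Kubo--Kuo \cite{AKK00}, and what the paper actually uses is an intermediate inequality from that reference's proof, namely $\alpha(n)\alpha(m)\le\alpha(0)\alpha(n+m)$, which it then inverts in Corollary~\ref{cor:akk} to handle sequences with $\alpha(0)>1$. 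Your ratio argument in fact delivers this sharper unnormalized form with no extra work, since $\alpha(n+m)/\alpha(n)=\prod_{i=1}^{m}r_{n+i}\ge\prod_{i=1}^{m}r_{i}=\alpha(m)/\alpha(0)$; so your route is essentially the standard proof of the cited theorem and, if you keep $\alpha(0)$ general, it also recovers exactly the statement the paper extracts from \cite{AKK00} for its corollary. It is also the same telescoping-ratio mechanism the paper reuses later, e.g.\ in the proof of Theorem~\ref{brrichtig} and of the Proposition in Section~3.
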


We also refer to (\cite{GMU24}, Theorem 4.4).
Since many important sequences, as $p(n)$, are not log-concave for all $n$, Benfield and Roy
\cite{BR24a} came up with
an interesting result, motivated by the sequence of partition numbers.

\begin{thext}[Benfield, Roy]
Let $\{\alpha(n)\}_{n=0}^{\infty}$ be a
sequence of
positive real numbers. Let a natural number $N$ exist, such that for all $n>N$ the sequence is log-concave: $\alpha(n)^2 \geq \alpha(n-1) \alpha(n+1)$. 
Let there be a $k\geq 0$ such that the single condition 
\begin{equation}\label{condition:BR}
\left( \alpha(N+k
) \right)^{\frac{1}{N+k
}} \geq 
\left( \alpha(N+k+1
) \right)^{\frac{1}{N+k+1
}}
\end{equation}
be valid. Then $ \alpha(m+n) \leq \alpha(m) \, \alpha(n)$ for all $m,n \geq
N+k$.
\end{thext}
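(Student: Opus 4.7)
My plan is to work with $f(n):=\log\alpha(n)$ and write $a:=N+k$. In this notation the log-concavity hypothesis says the forward difference $d(n):=f(n+1)-f(n)$ is non-increasing for $n\geq N$, condition~(\ref{condition:BR}) (raise to the power $a(a+1)$ and take logs) rearranges to $a\,d(a)\leq f(a)$, and the target inequality is the subadditivity $f(m+n)\leq f(m)+f(n)$ for $m,n\geq a$.

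My first step is to prove by induction on $n\geq a$ that $n\,d(n)\leq f(n)$, equivalently that $\alpha(n)^{1/n}$ is non-increasing on $n\geq a$. The base case is exactly the hypothesis. For the inductive step, I multiply first by $n+1>0$ and only afterwards split, using $f(n+1)=f(n)+d(n)$ and the induction hypothesis on the first summand:
\[
(n+1)\,d(n+1)\;\leq\;(n+1)\,d(n)\;=\;n\,d(n)+d(n)\;\leq\;f(n)+d(n)\;=\;f(n+1).
\]
It is essential to multiply before splitting, since the naive chain $d(n+1)\leq d(n)\leq f(n)/n$ multiplied by $n+1$ is too weak. Specialising to the pair $n=a,\,2a$ then yields the pivotal intermediate bound $f(2a)\leq 2f(a)$.

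The remainder is a double shifting argument. Without loss of generality let $a\leq m\leq n$. Writing $f(m+n)-f(n)=\sum_{j=0}^{m-1}d(n+j)$ and comparing term-by-term with $\sum_{j=0}^{m-1}d(a+j)=f(a+m)-f(a)$, monotonicity of $d$ together with $n\geq a$ gives $f(m+n)-f(n)\leq f(a+m)-f(a)$. Applying the same shifting to $f(a+m)-f(m)=\sum_{j=0}^{a-1}d(m+j)$, using $m\geq a$, produces $f(a+m)-f(m)\leq f(2a)-f(a)\leq f(a)$ by the first step. Chaining the two bounds yields
\[
f(m+n)\;\leq\;f(n)+f(a+m)-f(a)\;\leq\;f(n)+f(m),
\]
which is the desired inequality. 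The main conceptual obstacle is recognising that the single comparison of $\alpha(a)^{1/a}$ with $\alpha(a+1)^{1/(a+1)}$ propagates to non-increasingness of $\alpha(\cdot)^{1/\cdot}$ on the whole tail $n\geq a$; once that monotonicity and its consequence $f(2a)\leq 2f(a)$ are in hand, the rest is bookkeeping with monotone differences.
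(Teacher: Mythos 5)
Your proof is correct. Note that the paper gives no proof of this statement at all --- it is quoted from Benfield and Roy --- and the closest argument in the paper, the proof of Theorem~\ref{brrichtig}, runs along a genuinely different line: there the log-concave tail is extended backwards by a geometric sequence with ratio $\alpha(n_{0}-1)/\alpha(n_{0})$, the hypothesis \eqref{eq:bedingung} is used only to force $\beta(0)>1$ (respectively $\beta(0)\geq 1$ in the weak variant) for the extended sequence $\beta$, and then Corollary~\ref{cor:akk}, i.e.\ the Asai--Kubo--Kuo log-convexity argument applied to $1/\beta$, delivers the inequality; since for $n_{0}=N+k+1$ the weak form of \eqref{eq:bedingung} is algebraically the same condition as \eqref{condition:BR} (both say $(N+k+1)\log\alpha(N+k)\geq(N+k)\log\alpha(N+k+1)$), that route also recovers the present theorem for $m,n\geq N+k$. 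Your argument is instead self-contained: writing $f=\log\alpha$, $d(n)=f(n+1)-f(n)$, $a=N+k$, you show by induction that the single comparison \eqref{condition:BR} propagates to monotonicity of $\alpha(n)^{1/n}$ on the whole tail $n\geq a$ (the step $(n+1)d(n+1)\leq nd(n)+d(n)\leq f(n+1)$ is exactly the right way to run it), deduce $f(2a)\leq 2f(a)$, and finish with two telescoping comparisons of the non-increasing differences $d$; every step checks, including the index ranges ($a+j\geq N$ so monotonicity of $d$ applies, and $m\geq a\geq 1$ so the sums are nonempty). What the paper's method buys is that it plugs directly into the AKK machinery and simultaneously yields the strict version and the extension to small arguments in the set $A$ of Theorem~\ref{brrichtig}; what yours buys is a short elementary proof with no external input, which makes transparent why one inequality at a single index controls the entire tail.
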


Benfield and Roy applied their result to the sequence
$\{p(n)\}_{n=0}^{\infty}$. Since for $N= 26
$ and $k=0$ the condition (\ref{condition:BR})
is fulfilled, they obtain from the log-concavity property that
the Bessenrodt--Ono inequality
holds for $m,n >25$.

Unfortunately, this did
not cover the complete result
by Bessenrodt--Ono. There are still infinitely many pairs $(m,n)$
left to be checked.
For example the Bessenrodt--Ono inequality
holds true for all pairs
$(2,n)$, where $n >25$.

In this paper, motivated by the result
by Benfield and Roy,
we give a new criterion,
again, providing a sufficient condition (see
Example~\ref{hinreichend}),
but strong enough to
imply the complete result
by Bessenrodt and Ono (see Corollary~\ref{thm:bo}):

\begin{theorem}
\label{brrichtig}
Let a sequence $\{\alpha(n)\}_{n=0}^{\infty}$ with $\alpha(n) >0$ be given.
Suppose there is an $n_{0} \geq 1$ such for all $n \geq n_0$
the sequence is log-concave, $ \alpha(n)^2 \geq \alpha(n-1) \, \alpha(n+1)$,
for all $n \geq n_{0}$ and satisfies
\begin{equation}
\alpha(n_{0})^{1/n_{0}}>
\frac{\alpha(n_{0})}{\alpha(n_{0}-1)}.
\label{eq:bedingung}
\end{equation}
Let 
$$A
=\left\{ 1
\leq a\leq n_{0}-2\, : \, 
\alpha(a)^{1/a} >
\frac{\alpha(
{n_{0}})}{
\alpha({n_{0}-1})}
\right\} $$ and let further $m,n \geq 1$ satisfy 
\begin{equation*}
\big(m,n \geq n_0-1\big) \, \vee \, \big(\, m \in A, n \geq n_0-1\big) \, \vee \, 
\big( m \geq n_0-1, n \in A\big).
\end{equation*}
Then the Bessenrodt--Ono inequality $\alpha(m) \, \alpha(n) > \alpha(m+n)$ is satisfied.
\end{theorem}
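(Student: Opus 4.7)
The plan is as follows. First I would rephrase condition~(\ref{eq:bedingung}): by a direct algebraic rearrangement (clear the $1/n_{0}$-th power, cross-multiply), $\alpha(n_0)^{1/n_0} > \alpha(n_0)/\alpha(n_0-1)$ is equivalent to the Benfield--Roy-type inequality
\[
\alpha(n_0-1)^{1/(n_0-1)} \;>\; \alpha(n_0)^{1/n_0}.
\]
Combined with log-concavity from index $n_0$ onward, the strategy is to run one argument for Case~1 ($m,n\geq n_0-1$) and a separate, more direct argument for Cases~2 and~3 involving $A$.

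For Case~1, I would establish by induction the strict monotonicity $\alpha(k)^{1/k} > \alpha(k+1)^{1/(k+1)}$ for all $k\geq n_0-1$. The base case is the reformulated (\ref{eq:bedingung}). The induction step asserts: whenever $\alpha(N)^{1/N} > \alpha(N+1)^{1/(N+1)}$ and $\alpha(N+1)^2 \geq \alpha(N)\alpha(N+2)$, one also has $\alpha(N+1)^{1/(N+1)} > \alpha(N+2)^{1/(N+2)}$; after clearing denominators, this collapses to the inductive hypothesis itself. With strict monotonicity in hand, for $m,n\geq n_0-1$ (and $m,n\geq 1$ as assumed in the theorem), chaining gives $\alpha(m)^{1/m} > \alpha(m+n)^{1/(m+n)}$ and $\alpha(n)^{1/n} > \alpha(m+n)^{1/(m+n)}$; raising these to the powers $m$ and $n$ respectively and multiplying yields $\alpha(m)\alpha(n) > \alpha(m+n)$.

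For Cases~2 and~3, set $q := \alpha(n_0)/\alpha(n_0-1)$. Log-concavity makes the consecutive ratio $\alpha(k+1)/\alpha(k)$ non-increasing for $k\geq n_0-1$, hence bounded above by $q$ on that range. In Case~2, where $m\in A\subseteq\{1,\ldots,n_0-2\}$ and $n\geq n_0-1$, telescoping the $m$ consecutive ratios starting at index $n$ gives
\[
\alpha(m+n) \;=\; \alpha(n)\prod_{j=0}^{m-1}\frac{\alpha(n+j+1)}{\alpha(n+j)} \;\leq\; \alpha(n)\,q^{m}.
\]
The definition of $A$ supplies the strict bound $\alpha(m) > q^{m}$, so $\alpha(m+n) \leq \alpha(n) q^{m} < \alpha(m)\alpha(n)$. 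Case~3 follows by symmetry in $(m,n)$.

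The principal obstacle is propagating \emph{strict} inequality through the induction in Case~1: log-concavity is a non-strict hypothesis, so one has to verify that strictness at $N$ is not lost when passing to $N+1$. The bookkeeping is routine once the exponents are organized correctly, and the resulting chain $\alpha(n_0-1)^{1/(n_0-1)} > \alpha(n_0)^{1/n_0} > \alpha(n_0+1)^{1/(n_0+1)} > \cdots$ remains strict at every index, which is precisely what is needed to upgrade the Benfield--Roy conclusion from $\leq$ to $<$ and thus deliver the Bessenrodt--Ono inequality.
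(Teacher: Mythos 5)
Your argument is correct, and for the main case it takes a genuinely different route from the paper. The treatment of $m\in A$, $n\ge n_0-1$ (telescoping the non-increasing ratios $\alpha(k+1)/\alpha(k)\le \alpha(n_0)/\alpha(n_0-1)$ for $k\ge n_0-1$ against $\alpha(m)>\bigl(\alpha(n_0)/\alpha(n_0-1)\bigr)^m$ from the definition of $A$) is exactly the paper's second step. For $m,n\ge n_0-1$, however, the paper does not argue via root-monotonicity: it extends the sequence backwards by the geometric progression $\beta(n)=\bigl(\alpha(n_0-1)/\alpha(n_0)\bigr)^{n_0-n}\alpha(n_0)$ for $n<n_0$, observes that the resulting full sequence is log-concave with $\beta(0)>1$ precisely because of (\ref{eq:bedingung}), and then invokes Corollary~\ref{cor:akk} (the strict corollary to the Asai--Kubo--Kuo Theorem~\ref{thm:2}); since $\beta$ agrees with $\alpha$ from $n_0-1$ on, the strict BO-inequality follows for all $m,n\ge n_0-1$. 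You instead prove by induction the strict decay $\alpha(k)^{1/k}>\alpha(k+1)^{1/(k+1)}$ for $k\ge n_0-1$ and multiply two instances; your induction step is sound ($\alpha(N)^{N+1}>\alpha(N+1)^{N}$ together with $\alpha(N+1)^2\ge\alpha(N)\alpha(N+2)$ yields $\alpha(N+1)^{N+2}>\alpha(N+2)^{N+1}$, strictness surviving because the strict hypothesis is combined with the non-strict bound through positive quantities). Your route is self-contained and in effect re-derives a strict version of the Benfield--Roy mechanism, while the paper's route is shorter given the AKK machinery and explains why (\ref{eq:bedingung}) is exactly the normalization $\beta(0)>1$. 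One small caveat: your rephrasing of (\ref{eq:bedingung}) as $\alpha(n_0-1)^{1/(n_0-1)}>\alpha(n_0)^{1/n_0}$ presupposes $n_0\ge 2$; for $n_0=1$ (where $A=\emptyset$ and the theorem imposes $m,n\ge 1$) you should start the chain at $k=1$ directly, noting that (\ref{eq:bedingung}) then says $\alpha(0)>1$, so $\alpha(1)^2\ge\alpha(0)\alpha(2)>\alpha(2)$ — a one-line fix, not a genuine gap.
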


\begin{remark}
{}

\begin{enumerate}
\item The index $n_0$ 
is not unique and may be chosen suitably.

\item  After Theorem~\ref{brrichtig}
was published on arXiv, see \cite{HN24},
an improved version \cite{BR24b} of 
Benfield and Roy's result appeared also covered by our
result.
\end{enumerate}
\end{remark}

We illustrate the
previous theorem with two examples.

\begin{example}[see (\cite{BR24b}, 1.1, ex.\ 4)]
Let $\{\alpha(n)\}$ be given by
$\alpha \left( 0\right) =1/2$ and
$\alpha \left( n\right) =2^{\sqrt{n}}$. Then the sequence is log-concave for
all $n\geq 1$. Let $n_{0}=2$. Then $\alpha(2)^{1/2}> \alpha(2)/\alpha(1)$.
Therefore, condition (\ref{eq:bedingung})
is fulfilled and we obtain the strict
Bessenrodt--Ono inequality as stated in
Theorem~\ref{brrichtig} for
$n,m\geq n_{0}-1=1$. In this case, the set $A$ is empty and plays no role.
\end{example}

\begin{example}[see \cite{BR24b}, 1.1, ex.\ 5)]
Define the sequence
$\alpha \left( n\right) =\left( 1.24\right) ^{n}+0.000001$
for $0\leq n\leq 24$ and
$\alpha \left( n\right) =p\left( n\right) $ for
$n\geq 25$. This sequence is log-concave for $n\geq 26$.
Let $n_0=27
$.
Then condition (\ref{eq:bedingung}) is fulfilled and 
$A=\left\{ 2,3,\ldots ,25\right\} $.
This leads to
the strict Bessenrodt--Ono
inequality for all
$n,m\geq 2$ with $n\geq 26$ \emph{or\/}
$m\geq n_{0}-1=26$.

Note that
taking $n_{0}=26$ we obtain the strict
Bessenrodt--Ono inequality {only}
for all $n,m\geq 25$.
\end{example}
In the case of partition numbers $p(n)$ Theorem \ref{brrichtig}
leads to the following interesting application.

\begin{corollary}
\label{thm:bo}Let the sequence $\{p(n)\}$
of partition numbers be given. Then the log-concavity result
by Nicolas \cite{Ni78} implies
the Bessenrodt--Ono inequality result
by Bessenrodt and Ono \cite{BO16}.
\end{corollary}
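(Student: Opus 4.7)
The plan is to apply Theorem~\ref{brrichtig} to the sequence $\alpha(n)=p(n)$ with $n_{0}=26$, and then dispose of a finite residue of small pairs by direct inspection. Nicolas' theorem, recalled in the introduction, gives $p(n)^{2}\geq p(n-1)\,p(n+1)$ for every $n\geq 26$, which is precisely the log-concavity required by the theorem from $n_{0}=26$ onwards. To verify condition~\eqref{eq:bedingung}, I would substitute the tabulated values $p(25)=1958$ and $p(26)=2436$ and check the single numerical inequality $p(26)^{1/26}>p(26)/p(25)$, which reduces to a short estimate on $26$-th roots.

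Next I would identify the set $A$. Writing $q:=p(26)/p(25)=2436/1958$, one must check, for each $a$ in the finite range $1\leq a\leq 24$, whether $p(a)^{1/a}>q$. I expect the inequality to fail only at $a=1$ (where $p(1)=1<q$) and to hold at every $a\in\{2,3,\ldots,24\}$, yielding $A=\{2,\ldots,24\}$ as asserted in the paragraph following Theorem~\ref{brrichtig}. A direct tabulation of $p(2),\ldots,p(24)$ together with a comparison of their $a$-th roots against $q$ settles this step.

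Granted these two numerical inputs, Theorem~\ref{brrichtig} immediately delivers $p(m)\,p(n)>p(m+n)$ whenever $m,n\geq 25$, or $m\in A$ and $n\geq 25$, or $m\geq 25$ and $n\in A$. The pairs compatible with the Bessenrodt--Ono hypotheses $m,n\geq 2$ and $m+n>9$ that remain uncovered then lie in the finite square $\{(m,n):2\leq m,n\leq 24\}$, and for these I would finish by a direct comparison of $p(m)\,p(n)$ against $p(m+n)$ drawn from a partition table.

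I expect the main obstacle to be the numerical determination of $A$: one has to verify $p(a)^{1/a}>p(26)/p(25)$ for every integer $a$ between $2$ and $24$, and a non-monotone behaviour of these $a$-th roots could in principle spoil the range. Once this and~\eqref{eq:bedingung} are checked, the passage from Nicolas' log-concavity to the full Bessenrodt--Ono inequality is supplied by Theorem~\ref{brrichtig}, with only the finite tabulation on the small square remaining.
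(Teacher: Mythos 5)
Your proposal matches the paper's argument: the paper likewise applies Theorem~\ref{brrichtig} to $\alpha(n)=p(n)$ with $n_{0}=26$ (Nicolas giving log-concavity from $26$ on), verifies \eqref{eq:bedingung} numerically, identifies $A=\{2,\dots,24\}$, and reduces the claim to a finite computational check of the pairs $2\leq m,n\leq 24$. The numerical details you flag (the value $p(26)/p(25)=2436/1958$ and the behaviour of $p(a)^{1/a}$ on $2\leq a\leq 24$) indeed work out, so the proof is correct and essentially identical to the paper's.
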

\begin{proof}
Let $\alpha(n)=p(n)$ and $n_0=26$. Then (\ref{eq:bedingung}) is satisfied.
Moreover, $$A = \{ a\in \mathbb{N}_{0}:2 \leq a <
25 \}.$$
Therefore, only finitely many
pairs given by
$
\{ \left( m,n\right) \in \mathbb{N}_{0}^{2}\, : \, 2 \leq m,n <25
\} $ remain to be checked by a computer calculation.
\end{proof}
The sequence $\{c^n\}$ for $c>0$ is log-concave but
does not satisfy the strict Bessenrodt--Ono inequality. Nevertheless, a minor modification of
Theorem \ref{brrichtig} can be applied. 
Let
\begin{equation*}
\alpha(n_{0})^{1/n_{0}} \geq
\frac{\alpha(n_{0})}{\alpha(n_{0}-1)}
\end{equation*}
and
$$A :=\left\{ 1
\leq a\leq n_{0}-2
\, : \, 
\alpha(a)^{1/a} \geq
\frac{\alpha(
{n_{0}})}{
\alpha({n_{0}-1})}
\right\} .$$ 
If $m,n \geq 1$ satisfy
\begin{equation*}
\big(m,n \geq n_{0}-1\big) \, \vee \, \big(\, m \in A, n \geq n_{0}-1 \big) \, \vee \, 
\big( m \geq n_{0}-1, n \in A\big),
\end{equation*}
then
\begin{equation}
\alpha(m) \, \alpha(n) \geq
\alpha(m+n).
\label{eq:boschwach}
\end{equation}

In the last section we invest into a recent conjecture
by Benfield and Roy related to $\ell $-ary partition
numbers \cite{BR24b}.

\section{Proof of Theorem \ref{brrichtig}}
For our purposes we quickly derive the following from the proof
of Theorem~\ref{thm:2} in \cite{AKK00}.

\begin{corollary}[to the proof of Theorem~\ref{thm:2}]
\label{cor:akk}Let $\left\{ \alpha
\left( {n}\right) \right\}_{n=0}^{\infty}$ be a sequence of positive
numbers with $\alpha
\left( {0}\right) >1$.
If $\left\{ \alpha
\left( {n}\right) \right\} _{n=0}^{\infty}$
is log-concave,
then
\begin{equation} \label{eq:bo}
\alpha
\left( {n}\right) \alpha
\left( {m}\right) >\alpha 
\left( {n+m}\right) , \quad \forall n, m\geq 0
.
\end{equation}
\end{corollary}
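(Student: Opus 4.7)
The plan is to mirror the proof of Theorem~\ref{thm:2} by Asai, Kubo, and Kuo: in the log-convex setting there, monotonicity of the successive ratios $\alpha(n+1)/\alpha(n)$ combined with a telescoping product yields super-multiplicativity. Under our hypothesis of log-concavity, the same ratios $r(n) := \alpha(n+1)/\alpha(n)$ are instead non-increasing in $n\geq 0$, so the analogous telescoping runs in the opposite direction and will give sub-multiplicativity. The strict inequality in \eqref{eq:bo} will then be recovered from the strict normalisation $\alpha(0)>1$.

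Concretely, for $n,m\geq 1$ one writes
\[
\frac{\alpha(n+m)}{\alpha(n)} \;=\; \prod_{k=1}^{m} r(n+k-1) \;\leq\; \prod_{k=1}^{m} r(k-1) \;=\; \frac{\alpha(m)}{\alpha(0)},
\]
the inequality using $r(n+k-1)\leq r(k-1)$ from monotonicity. After rearranging, this becomes
\[
\alpha(0)\,\alpha(n+m) \;\leq\; \alpha(n)\,\alpha(m),
\]
an estimate which also holds trivially when $n=0$ or $m=0$ (both sides coincide there). Since $\alpha(0)>1$, dividing yields
\[
\alpha(n+m) \;\leq\; \frac{\alpha(n)\,\alpha(m)}{\alpha(0)} \;<\; \alpha(n)\,\alpha(m),
\]
which is the desired \eqref{eq:bo}.

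There is no genuine obstacle here; the argument is essentially a transcription of the Asai--Kubo--Kuo proof with reversed inequalities. The only point worth flagging is that the strictness of the Bessenrodt--Ono conclusion does \emph{not} come from the log-concavity hypothesis (which is permitted to be an equality at each step), but is produced entirely by the strict normalisation $\alpha(0)>1$, which provides the extra factor needed in the final division. This is precisely the feature that will be exploited in the proof of Theorem~\ref{brrichtig}, where a shifted auxiliary sequence is constructed so that its zeroth term exceeds $1$.
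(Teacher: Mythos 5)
Your argument is correct: log-concavity makes the ratios $r(n)=\alpha(n+1)/\alpha(n)$ non-increasing, the telescoping comparison gives $\alpha(0)\,\alpha(n+m)\leq\alpha(n)\,\alpha(m)$ for all $n,m\geq 0$, and the strict normalisation $\alpha(0)>1$ then yields the strict inequality \eqref{eq:bo}. You also correctly identify that strictness comes solely from $\alpha(0)>1$, which is exactly the feature exploited later in the proof of Theorem~\ref{brrichtig}. The route differs from the paper's in presentation rather than substance: the paper does not rework the telescoping argument at all, but instead quotes the intermediate inequality $\alpha(n)\alpha(m)\leq\alpha(0)\alpha(n+m)$ reached in the Asai--Kubo--Kuo proof for \emph{log-convex} sequences, notes that assuming $\alpha(0)<1$ there gives the strict reversed inequality $\alpha(n)\alpha(m)<\alpha(n+m)$, and then transfers this to the log-concave setting by the duality $\beta(n)=1/\alpha(n)$ (log-concave $\alpha$ with $\alpha(0)>1$ corresponds to log-convex $\beta$ with $\beta(0)<1$), inverting at the end. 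Your version is self-contained and makes the mechanism (monotone ratios plus telescoping) explicit, at the cost of reproving what the citation supplies; the paper's version is shorter but leans on the internals of \cite{AKK00} and the reciprocal-sequence trick.
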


\begin{proof}
Let $\alpha \left( n\right) $ be the members
of a sequence as in
Theorem~\ref{thm:2}.
For such a sequence the authors of \cite{AKK00} arrive
for $n\geq 0$ and $m\geq 1$ in the first line on page~84 at the
inequality
$\alpha \left( n\right) \alpha \left( m\right) \leq \alpha \left( 0\right) \alpha \left( n+m\right) $.
If we now assume $\alpha \left( 0\right) <1$ (instead of
$\alpha \left( 0\right) =1$) this shows
\begin{equation}
\alpha \left( n\right) \alpha \left( m\right) <\alpha \left( n+m\right)
.
\label{eq:striktboinv}
\end{equation}

Now we consider a sequence consisting of $\alpha
\left( {n}\right) $ with assumptions as stated in the
corollary. Since our sequence is log-concave if we put
$\beta
\left( n\right) =1/\alpha
\left( {n}\right) $ this sequence will be log-convex and we obtain
from (\ref{eq:striktboinv}) by inversion $\alpha
\left( {n}\right) \alpha
\left( {m}\right) >\alpha
\left( {n+m}\right) $.
\end{proof}

With this auxiliary result we can now finish the proof of
Theorem~\ref{brrichtig}.

\begin{proof}[Proof of Theorem \ref{brrichtig}]
Firstly, let $a,b\geq n_{0}-1$. Then we define
\[
\beta
\left( {n}\right) =\left\{
\begin{array}{ll}
\left( \frac{\alpha
\left( {n_{0}-1}\right) }{\alpha
\left( {n_{0}}\right) }\right) ^{n_{0}-n}\alpha
\left( {n_{0}}\right) , & n<n_{0}, \\
\alpha
\left( {n}\right) , & n\geq n_{0}.
\end{array}
\right.
\]
Then
$\beta
\left( {0}\right) =\left( \frac{\alpha
\left( {n_{0}-1}\right) }{\alpha
\left( {n_{0}}\right) }\right) ^{n_{0}}\alpha
\left( {n_{0}}\right) >
1$
by (\ref{eq:bedingung}). The complete sequence consisting of the $\beta
\left( {n}\right) $
is now log-concave and we can apply the corollary
of the
result by Asai, Kubo, and Kuo
\cite{AKK00}.
From Corollary~\ref{cor:akk} with $\beta
\left( {0}\right) >1$ we obtain $\beta
\left( {a}\right) \beta
\left( {b}\right) >\beta
\left( {a+b}\right) $
for all $a,b\geq 0$.
Since $\beta
\left( {n}\right) =\alpha
\left( {n}\right) $ for all $n\geq n_{0}-1$ the
Bessenrodt--Ono inequality (\ref{eq:bo}) is also fulfilled for all
$a,b\geq n_{0}-1$.

We have $\left( \alpha
\left( {n}\right) \right) ^{2}\geq \alpha
\left( {n-1}\right) \alpha
\left( {n+1}\right) $ for $n\geq n_{0}$. Since all $\alpha
\left( {n}\right) $ are
positive we have $$\frac{\alpha
\left( {n}\right) }{\alpha
\left( {n-1}\right) }\geq \frac{\alpha
\left( {n+1}\right) }{\alpha
\left( {n}\right) }.$$

Now we
assume $a\in A$ and $b\geq n_{0}-1$.
Then
$$\frac{\alpha
\left( {a+b}\right) }{\alpha
\left( {b
}\right) }=\prod _{k=b
+1}^{a+b}\frac{\alpha
\left( {k}\right) }{\alpha
\left( {k-1}\right) }\leq \left( \frac{\alpha
\left( {n_{0}
}\right) }{\alpha
\left( {n_{0}-1}\right) }\right) ^{a
}.$$
Therefore, by our assumption $\frac{\alpha
\left( {a+b}\right) }{\alpha
\left( {b
}\right) }<
\alpha
\left( {a}\right) $ which implies $\alpha
\left( {a+b} \right) <
\alpha
\left( {a}\right) \alpha
\left( {b}\right) $.
\end{proof}

\section{Applications and variants}

The following example shows that our conditions
$$ \left( \alpha \left( n_{0}\right) \right) ^{1/n_{0}}>\frac{\alpha \left( n_{0}\right) }{\alpha \left( n_{0}-1\right) }
\text{ and }
\left( \alpha \left( a\right) \right) ^{1/a}>\frac{\alpha \left( n_{0}\right) }{\alpha \left( n_{0}-1\right) }$$
are sufficient but not necessary to obtain the Bessenrodt--Ono inequality.

\begin{example}
\label{hinreichend}We consider the sequence given in Table~\ref{folgebsp3}
with
$\alpha
\left( {0}\right) $ and $\alpha
\left( {1}\right) $ positive.
\begin{table}[H]
\[
\begin{array}{rcccc}
\hline
n=&2&3&4&n\geq 5\\ \hline \hline
\alpha \left( n\right) =&7&5&15&\frac{30}{\left( n-4\right) !}\\ \hline
\end{array}
\]
\caption{\label{folgebsp3}Definition of the sequence from
Example~\ref{hinreichend}}
\end{table}
The sequence consisting of the $\alpha
\left( {n}\right) $ is log-concave for $n\geq n_{0}=4$, see
Table~\ref{logkonkavbsp3}.
\begin{table}[H]
\[
\begin{array}{rcccc}
\hline
n=&3&4&5&n\geq 6\\ \hline \hline
\frac{\left( \alpha \left( n\right) \right) ^{2}}{\alpha \left( n-1\right) \alpha \left( n+1\right) }=
&\frac{25}{105}<1
&\frac{225}{150}>1&\frac{900}{225}>1&\frac{n-3}{n-4}>1\\ \hline
\end{array}
\]
\caption{\label{logkonkavbsp3}Log-concavity and -convexity in
Example~\ref{hinreichend}}
\end{table}
Furthermore, we obtain the Bessenrodt--Ono inequality for all instances
$a,b\geq 2$, see Table~\ref{BObsp3}.
\begin{table}[H]
\[
\begin{array}{rrlrlrlrl}
\hline
a\backslash b&&2&&3&&4&b&\geq 5\\ \hline \hline
2&49&>15&35&>30&105&>15
&\frac{210}{\left( b-4\right) !}&>\frac{30}{\left( b-2\right) !}\\
3&35&>30&25&>15&75&>5
&\frac{150}{\left( b-4\right) !}&>\frac{30}{\left( b-1\right) !}\\
4&105&>15&75&>5&225&>\frac{5}{4}
&\frac{450}{\left( b-4\right) !}&>\frac{30}{b!}\\
a\geq 5&\frac{210}{\left( a-4\right) !}&>\frac{30}{\left( a-2\right) !}
&\frac{150}{\left( a-4\right) !}&>\frac{30}{\left( a-1\right) !}
&\frac{450}{\left( a-4\right) !}&>\frac{30}{a!}
&\frac{900}{\left( a-4\right) !\left( b-4\right) !}
&>\frac{30}{\left( a+b-4\right) !}\\ \hline
\end{array}
\]
\caption{\label{BObsp3}Instances of the Bessenrodt--Ono
inequality
$\alpha \left( a\right) \alpha \left( b\right) 
>\alpha \left( a+b\right) $
for Example~\ref{hinreichend}}
\end{table}
But for $n_{0}=4$ we obtain
$\left( \alpha \left( n_{0}
\right) \right) ^{1/n_{0}
}=15^{1/4}<3=\frac{\alpha \left( n_{0}
\right) }{\alpha \left( n_{0}-1
\right) }$
and $\left( \alpha
\left( {2}\right) \right) ^{1/2}=\sqrt{7
}<3
=\frac{\alpha
\left( {n_{0}}\right) }{\alpha
\left( {n_{0}-1}\right) }$.
\end{example}

On the other hand we have the following in the opposite direction.

\begin{proposition}
Let $\alpha
\left( {n}\right) $ be a sequence of positive numbers
that
is log-concave for all
$n\geq 1$ and which satisfies
the Bessenrodt--Ono
inequality (\ref{eq:boschwach})
\begin{equation*}
\alpha
\left( n
\right) \alpha
\left( m
\right) \geq \alpha
\left( n
+m
\right)
\end{equation*}
for all $n
,m
\geq 0
$. Then
$\left( \alpha
\left( {n}\right) \right) ^{1/n}\geq \frac{\alpha
\left( {n}\right) }{\alpha
\left( {n-1}\right) }$ for all $n\geq 1$.
\end{proposition}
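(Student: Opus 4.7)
The plan is to exploit the telescoping factorization $\alpha(n)=\alpha(0)\prod_{k=1}^{n}r_{k}$ where $r_{k}=\alpha(k)/\alpha(k-1)$, together with the fact that log-concavity makes the sequence of ratios $(r_{k})_{k\geq 1}$ non-increasing. The desired inequality $\alpha(n)^{1/n}\geq \alpha(n)/\alpha(n-1)=r_{n}$ is the same as $\alpha(n)\geq r_{n}^{n}$, so it will be enough to bound the product $\prod_{k=1}^{n}r_{k}$ from below by $r_{n}^{n}$ and to know that the prefactor $\alpha(0)$ is at least $1$.

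First I would record the consequence of log-concavity for $n\geq 1$ in the ratio form
\[
\frac{\alpha(k)}{\alpha(k-1)}\geq \frac{\alpha(k+1)}{\alpha(k)},\qquad k\geq 1,
\]
which (by induction) gives $r_{k}\geq r_{n}$ for every $1\leq k\leq n$. Multiplying these inequalities yields
\[
\prod_{k=1}^{n}\frac{\alpha(k)}{\alpha(k-1)}\;\geq\;r_{n}^{n},
\]
and the left-hand side telescopes to $\alpha(n)/\alpha(0)$.

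Next I would extract the missing factor $\alpha(0)\geq 1$ from the Bessenrodt--Ono hypothesis: applying (\ref{eq:boschwach}) with $m=n=0$ gives $\alpha(0)^{2}\geq \alpha(0)$, hence $\alpha(0)\geq 1$ since $\alpha(0)>0$. Combining with the telescoping bound,
\[
\alpha(n)\;=\;\alpha(0)\prod_{k=1}^{n}r_{k}\;\geq\;\prod_{k=1}^{n}r_{k}\;\geq\;r_{n}^{n},
\]
and taking $n$-th roots produces $\alpha(n)^{1/n}\geq r_{n}=\alpha(n)/\alpha(n-1)$, as required.

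I do not foresee a genuine obstacle; the only subtlety is to remember that the weak Bessenrodt--Ono inequality itself forces $\alpha(0)\geq 1$, since without this observation the argument would only give the bound $\alpha(n)^{1/n}\geq \alpha(0)^{1/n}\,r_{n}$, which is too weak. Everything else is a monotone-ratio observation plus a single telescoping product, so the proof should be only a few lines long.
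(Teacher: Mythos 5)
Your proposal is correct and follows essentially the same route as the paper: the telescoping factorization $\alpha(n)=\alpha(0)\prod_{k=1}^{n}\frac{\alpha(k)}{\alpha(k-1)}$ combined with the non-increasing ratios coming from log-concavity. The only difference is that you make explicit the step $\alpha(0)\geq 1$ (from the Bessenrodt--Ono hypothesis at $n=m=0$), which the paper's one-line proof uses implicitly; this is a welcome clarification, not a deviation.
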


\begin{proof}
We have
$\alpha
\left( {n}\right) =\alpha
\left( {0}\right) \prod _{k=1}^{n}\frac{\alpha
\left( {k}\right) }{\alpha
\left( {k-1}\right) }\geq \left( \frac{\alpha
\left( {n}\right) }{\alpha
\left( {n-1}\right) }\right) ^{n}$
as $\frac{\alpha
\left( {k}\right) }{\alpha
\left( {k-1}\right) }\geq \frac{\alpha
\left( {k+1}\right) }{\alpha
\left( {k}\right) }$.
\end{proof}

\begin{example}
Now we consider the plane partition numbers
$\func{pp}\left( n\right) $. By \cite{OPR22} this sequence is
log-concave for $n\geq n_{0}=12$. We can check that
$\left( \func{pp}\left( 12\right) \right) ^{1/12}>
\frac{\func{pp}\left( 12\right) }{\func{pp}
\left( 11\right) }
$.
We obtain
\[
A=\left\{ 1\leq a\leq 10:\left( \func{pp}\left( a\right) \right) ^{1/a}>\frac{\func{pp}\left( 12\right) }{\func{pp}\left( 11\right) }\right\} =\left\{ a\in \mathbb{N}_{0}:2\leq a\leq 10\right\}
.
\]
Computationally checking the cases for $2\leq
m,n\leq 10$ then
yields our previous result with Tr\"{o}ger, see
\cite{HNT}.
\end{example}

\begin{example}
Let
$\sum _{n=0}^{\infty }\bar{p}\left( n\right) q^{n}=\prod _{n=1}^{\infty }\frac{1+q^{n}}{1-q^{n}}$
define the overpartitions. By the result by
Engel \cite{En17} it is strictly log-concave
$\left( \bar{p}\left( n\right) \right) ^{2}>\bar{p}\left( n-1\right) \bar{p}\left( n+1\right) $
for $n\geq 3$. Nevertheless it is already
log-concave (\ref{logconcave}) for all
$n\geq 1$. Let $n_{0}=4$. Then
$\left( \bar{p}\left( n_{0}\right) \right) ^{1/n_{0}}>\frac{\bar{p}\left( 4\right) }{\bar{p}\left( 3\right) }$
and
\[
A=\left\{ 1\leq a\leq 2:\left( \bar{p}\left( a\right) \right) ^{1/a}>\frac{\bar{p}\left( 4\right) }{\bar{p}\left( 3\right) }\right\} =\left\{ 1,2\right\}
.
\]
By our result we obtain that the
Bessenrodt--Ono inequality (\ref{eq:bo}) is fulfilled for all
$n\geq 3$ or $m\geq 3$. By
computationally checking the cases
$1\leq n,m\leq 2$ we obtain the result of
\cite{LZ21}: the overpartitions fulfill the
Bessenrodt--Ono inequality (\ref{eq:bo}) for all $n,m\geq 1$ except
for
$\left( n,m\right) \in \left\{ \left( 1,1\right) ,\left( 1,2\right) ,\left( 2,1\right) \right\} $
where we have equality.
Equality also holds when $n=0$ or $m=0$.
\end{example}

\begin{example}
Let
$\sum _{n=0}^{\infty }p_{k
}\left( n\right) q^{n}=\frac{\prod _{n=1}^{\infty }\left( 1-q^{kn}\right) }{\prod _{n=1}^{\infty }\left( 1-q^{
n}\right)
}$ define the $k$-regular partitions.

\begin{enumerate}
\item  Let $k=2$.
By a result by Dong and Ji \cite{DJ24} this is log-concave for $n\geq n_{0}=33$.
We obtain $\left( p_{2}\left( 33\right) \right) ^{1/33}>
\frac{p_{2}\left( 33\right)
}{p_{2}\left( 32\right)
}$ and
\[
A=\left\{ 1\leq a\leq 31:\left( p_{2}\left( a\right) \right) ^{1/a}>
\frac{p_{2}\left( 33\right) }{p_{2}\left( 32\right) }\right\} =\left\{ a\in \mathbb{N}_{0}:3\leq a\leq 31\right\}
.
\]
By computationally checking the cases $3\leq n,m\leq 31$ we obtain the result by
Beckwith and Bessenrodt \cite{BB} regarding the Bessenrodt--Ono inequality.

\item  Now let $k=3$. Then again by
\cite{DJ24}
this is log-concave for
$n\geq n_{0}=58$. We obtain
$\left( p_{3}\left( 58
\right) \right) ^{1/58
}>\frac{p_{3}\left( 58\right) }{p_{3}\left( 57\right) }$
and
\[
A=\left\{ 1\leq a\leq 56:\left( p_{3}\left( a\right) \right) ^{1/a}>\frac{p_{3}\left( 58\right) }{p_{3}\left( 57\right) }\right\} =\left\{ a\in \mathbb{N}_{0}:2\leq a\leq 56\right\}
.
\]
Computationally checking the cases $2\leq n,m\leq 56$ we obtain the result by
Beckwith and Bessenrodt \cite{BB} regarding the Bessenrodt--Ono
inequality.
\end{enumerate}
\end{example}

\begin{proposition}
Let $\left\{ \alpha \left( n\right) \right\}
_{n=0}^{\infty }$
be a sequence of positive numbers
such that
\[
\left( \alpha \left( n\right) \right) ^{\frac{1}{n}}
>\left( \alpha \left( n+1\right) \right) ^{\frac{1}{n+1}}\]
for all $n\geq n_{0}\geq 1$. Then
$\alpha \left( n\right) \alpha \left( m\right) >\alpha \left( n+m\right) $
for all $n,m\geq n_{0}$.
\end{proposition}

\begin{proof}
We obtain via a direct computation
\begin{eqnarray*}
\alpha \left( n\right) \alpha \left( m\right)
&=&\left( \left( \alpha \left( n\right) \right) ^{\frac{1}{n}}\right) ^{n}
\left( \left( \alpha \left( m\right) \right) ^{\frac{1}{m}}\right) ^{m}\\
&>&\left( \left( \alpha \left( n+m\right) \right) ^{\frac{1}{n+m}}\right) ^{n}
\left( \left( \alpha \left( n+m\right) \right) ^{\frac{1}{n+m}}\right) ^{m}
=\alpha \left( n+m\right)
.
\end{eqnarray*}
\end{proof}

\section{On a conjecture by Benfield and Roy}

Benfield and Roy made a conjecture
(\cite{BR24b}, Conjecture~5.1) on the
strict Bessenrodt--Ono inequality of
$\ell $-ary partition numbers
$b^{\ell }\left( n\right) $ of $n$.
It is well known (\cite{GMU24}, Example~4.7)
that the $\ell $-ary
sequences are not log-concave. We first invest in the
case $\ell =5$. Obviously
$b^{5}\left( n\right) b^{5}\left( m\right)
\leq b^{5}\left( n+m\right) $
if $m$ or $n\in \left\{ 1,2,3,4\right\} $. This is
similar to case $m$ or $n\in \left\{ 1\right\} $ in
\cite{BO16}. For the
other cases they conjectured that $7\leq m\leq 9$ and
$6\leq n\leq 9$.
But since
$b^{5}\left( 6\right) =2
=b^{5}\left( 7
\right) $
and $b^{5}\left( 13
\right) =3
$
this has to be modified. The modified conjecture is
recorded in Table~\ref{b5}.

\begin{table}[H]
\[
\begin{array}{ll}
\hline
b^{5}\left( n\right)
b^{5}\left( m\right)
=b^{5}\left( n+m\right)
&b^{5}\left( n\right)
b^{5}\left( m\right)
<b^{5}\left( n+m\right)
\\ \hline \hline
m=1,n\equiv 0,1,2,3\pmod{5}
&m=1,n\equiv 4\pmod{5}\\
m=2,n\equiv 0,1,2\pmod{5}
&m=2,n\equiv 3,4\pmod{5}\\
m=3,n\equiv 0,1\pmod{5}
&m=3,n\equiv 2,3,4\pmod{5}\\
m=4,n\equiv 0\pmod{5}
&m=4,n\equiv 1,2,3,4\pmod{5}\\
m, n\leq 9,n+m\geq 15&\\ \hline
\end{array}
\]
\caption{\label{b5}Conjectured equality and failure of
$b^{5}\left( n\right)
b^{5}\left( m\right)
\geq b^{5}\left( n+m\right) $ for $n\geq m\geq 1$}
\end{table}


\begin{theorem}
The conjecture holds true for all pairs $m,n\leq 7500$.
\end{theorem}

This has been verified with PARI/GP.

The same applies to the $\ell $-ary partition
numbers $b^{\ell }\left( n\right) $
(Table~\ref{br}).

\begin{table}[H]
\begin{center}
\begin{tabular}{ll}
\hline
$b^{\ell }\left( n\right)
b^{\ell }\left( m\right)
=b^{\ell }\left( n+m\right) $
&$b^{\ell }\left( n\right)
b^{\ell }\left( m\right)
<b^{\ell }\left( n+m\right) $
\\ \hline \hline
$1\leq m\leq \ell -1$
with&$1\leq m\leq \ell -1$ with\\
$n\equiv 0,1,\ldots ,\ell -m-1\pmod{\ell }$
&$n\equiv \ell -m, \ldots ,\ell -1\pmod{\ell }$
\\
$m, n\leq 2\ell -1,m+n\geq 3\ell $&\\ \hline
\end{tabular}
\end{center}
\caption{\label{br}Conjectured equality and failure of
$b^{\ell }\left( n\right)
b^{\ell }\left( m\right)
\geq b^{\ell }\left( n+m\right) $ for $n\geq m\geq 1$
and $\ell\geq 5$}
\end{table}

In the following we
prove that all these
cases of equality and failure in Table~\ref{br} occur.
For $1\leq s\leq \ell $ and
$\left( s-1\right) \ell \leq n\leq s\ell -1$ holds
$
b^{\ell }\left( n\right) =s
$.
For $0\leq r\leq \ell -1$ also
$b^{\ell }\left( s\ell +r\right) =b^{\ell }\left( s\ell \right) $ holds true
for all $s\geq 0$ since the smallest part in a partition
different from $1$ is $\ell $.
For the small values $1\leq m\leq \ell -1$ we obtain therefore
$b^{\ell }\left( n\right) b^{\ell }\left( m\right) =b^{\ell }\left( n\right)
=b^{\ell }\left( n+m\right) $
if $n\equiv 0,1,\ldots ,\ell -m-1\pmod{\ell }$. On the other hand
$b^{\ell }\left( \ell s-1\right) <b^{\ell }\left( \ell s\right) $. Therefore,
$b^{\ell }\left( n\right) b^{\ell }\left( m\right) =b^{\ell }\left( n\right)
<b^{\ell }\left( n+m\right) $ if
$n\equiv \ell -m,\ell -m+1,\ldots ,\ell -1\pmod{\ell }$.
Furthermore for $\ell \leq n,m\leq 2\ell -1$ we obtain
$2\ell \leq n+m\leq 4\ell -2$ and therefore
$b^{\ell }\left( n\right) b^{\ell }\left( m\right) =4
>3=b^{\ell }\left( n+m\right) $
if $n+m\leq 3\ell -1$ and
$b^{\ell }\left( n\right) b^{\ell }\left( m\right) =4
=b^{\ell }\left( n+m\right) $
if $n+m\geq 3\ell $.

Now consider generally
$\left( s-1\right) \ell \leq m\leq s\ell -1$ and
$\left( t-1\right) \ell \leq n\leq t\ell -1$ with
$n+m\leq \ell ^{2}-1$. Then
$b^{\ell }\left( n\right) b^{\ell }\left( m\right) =ts$ and
$b^{\ell }\left( n+m\right) =t+s$ if $n+m\geq \left( t+s-1\right) \ell $ and
$b^{\ell }\left( n+m\right) =t+s-1$ if $n+m\leq \left( t+s-1\right) \ell -1$.
Now we obtain $ts=t+s$ if and only if
$\left( t-1\right) \left( s-1\right) =ts-t-s+1=1$ i.~e.\ $t=2=s$ and
$ts=t+s-1$ if and only if $\left( t-1\right) \left( s-1\right) =ts-t-s+1=0$
i.~e.\ $t=1$ or $s=1$. Similarly $ts<t+s$ if and only if
$\left( t-1\right) \left( s-1\right) <1$ i.~e.\ $t=1$ or $s=1$ and
$ts<t+s-1$ if and only if $\left( t-1\right) \left( s-1\right) <0$ i.~e.\
never.

With PARI/GP we verified the conjecture for $\ell =6$
and $m,n\leq 3500$ and for $\ell =7$ and $m,n\leq 1200$.

\begin{example}
Consider the following
sequence consisting of $\alpha \left( n\right) =2^{n}+1$ for $n$ odd and
$\alpha \left( n\right) =2^{n}+2$ for $n$ even. Then for $n$ odd we obtain
\begin{eqnarray*}
\left( \alpha \left( n\right) \right) ^{2}
-\alpha \left( n-1\right) \alpha \left( n+1\right)
&=&\left( 2^{n}+1\right) ^{2}-\left( 2^{n-1}+2\right) \left( 2^{n+1}+2\right) \\
&=&2^{2n}+2^{n+1}+1-2^{2n}-2^{n+2}-2^{n}-4\\
&=&-3\cdot 2^{n}-3<0
\end{eqnarray*}
which fulfills the log-convexity condition and for $n$ even we obtain
\begin{eqnarray*}
\left( \alpha \left( n\right) \right) ^{2}
-\alpha \left( n-1\right) \alpha \left( n+1\right)
&=&\left( 2^{n}+2\right) ^{2}-\left( 2^{n-1}+1\right) \left( 2^{n+1}+1\right) \\
&=&2^{2n}+2^{n+2}+4-2^{2n}-2^{n-1}-2^{n+1}-1\\
&=&3\cdot 2^{n-1}+3>0
\end{eqnarray*}
which fulfills the log-concavity condition. Therefore, the sequence is
neither log-concave nor log-convex. Consider now for
arbitrary $n$
$\left( \alpha \left( n\right) \right) ^{n+1}\geq
\left( 2^{n}+1\right) ^{n+1}
=\left( 2^{n}+1\right) ^{n}\left( 2^{n}+1\right)
>\left( 2^{n}+1\right) ^{n}2^{n}=\left( 2^{n+1}+2\right) ^{n}
\geq \left( \alpha \left( n+1\right) \right) ^{n}$.
For all $n$ therefore
$\left( \alpha \left( n\right) \right) ^{\frac{1}{n}}
>\left( \alpha \left( n+1
\right) \right) ^{\frac{1}{n+1}}$
holds true.
We can
check
that
\[
\alpha \left( n\right) \alpha \left( m\right)
\geq \left( 2^{n}+1\right) \left( 2^{m}+1\right) =2^{n+m}+2^{n}+2^{m}+1
>2^{n+m}+2\geq \alpha \left( n+m\right)
\]
for all $n,m$.

To summarize we have a sequence that is neither
log-concave nor log-convex, satisfies the Bessenrodt--Ono
inequality, and for which
$\left( \alpha \left( n\right) \right) ^{1/n}
>\left( \alpha \left( n+1\right) \right)
^{1/\left( n+1\right) }$.
\end{example}

\section*{Declarations and statements}

\subsection*{Funding}

This research received no external funding.

\subsection*{Competing
interests}

The authors declare that they have no competing interests.

\subsection*{Compliance with
ethical
standards}

The authors declare that they comply with ethical
standards

\subsection*{Data
availability
statement}

No datasets were generated or analysed during the current study.

\subsection*{Author's contribution statement}

Both authors contributed in equal parts.


\end{document}